\def\module#1{\mathrm{mod}\text{-}#1}
\def\pv#1{\ensuremath{{\bf#1}}}
\def\Ann#1{\mathrm{Ann}_S(#1)}
\def\p{\varphi}
\def\J{\mathrel{{\mathscr J}}} 
\def\R{\mathrel{{\mathscr R}}} 
\def\L{\mathrel{{\mathscr L}}} 
\def\e<{\leq _{E}}
\def\malce{\mathbin{\hbox{$\bigcirc$\rlap{\kern-8.3pt\raise0,50pt\hbox{$\mathtt{m}$}}}}}
\def\1sk{^{(1)}}
\def\to{\rightarrow}
\def\Thmname{Theorem}
\def\Propname{Proposition}
\def\Lemmaname{Lemma}
\def\Definitionname{Definition}
\newtheorem{Thm}{\Thmname}
\newtheorem{Fact}[Thm]{Fact}
\newtheorem{Lemma}[Thm]{\Lemmaname}
\newtheorem{Def}[Thm]{\Definitionname}
\newtheorem{Cor}[Thm]{Corollary}
\numberwithin{equation}{section}
\title{On the irreducible representations of a finite semigroup}
\author[O.~Ganyushkin]{Olexandr Ganyushkin}
\address{Department of Mechanics and Mathematics\\ 
Kyiv Taras Shev\-chen\-ko University\\ 
64, Volodymyrska st.\\
 01033, Kyiv, UKRAINE,}
\thanks{The first author was supported in part by STINT}
\email{ganiyshk\symbol{64}univ.kiev.ua}
\author[V.~Mazorchuk]{Volodymyr Mazorchuk}
\address{Department of Mathematics\\ 
Uppsala University\\ 
SE 471 06\\
Uppsala, SWEDEN\\
and Department of Mathematics\\ 
University of Glasgow\\ 
University Gardens\\
Glasgow G12 8QW, UK}
\thanks{The second author was supported in part by the Swedish Research Council}
\email{mazor\symbol{64}math.uu.se, v.mazorchuk@maths.gla.ac.uk}
\author[B.~Steinberg]{Benjamin Steinberg}
\address{Carleton University \\
1125 Colonel By Drive\\
Ottawa, Ontario  K1S 5B6 \\
CANADA}
\thanks{The third author was supported in part by NSERC}
\email{bsteinbg@math.carleton.ca}
\date{December 18, 2007}
\keywords{Representations, simple modules, semigroups}
\subjclass[2000]{16G10,20M30,20M25}
\begin{document}
\begin{abstract}
Work of Clifford, Munn and Ponizovski{\u\i} parameterized the irreducible
representations of a finite semigroup in terms of the irreducible
representations of its maximal subgroups.  Explicit constructions of
the irreducible representations were later obtained independently by
Rhodes and Zalcstein and by Lallement and Petrich.  All of these
approaches make use of Rees's
theorem characterizing $0$-simple semigroups up to isomorphism.  Here
we provide a short modern proof of the Clifford-Munn-Ponizovski{\u\i}
result based on a lemma of
J.~A.~Green, which allows us to circumvent the theory of $0$-simple
semigroups.  A novelty of this approach is that it works over any base
ring.
\end{abstract}
\maketitle

\section{Introduction and preliminaries}
Work of Clifford~\cite{Clifford1,Clifford2}, Munn~\cite{Munn1,Munn2} and Ponizovski{\u\i}~\cite{Poni} parameterized the irreducible
representations of a finite semigroup in terms of the irreducible
representations of its maximal subgroups. (See~\cite[Chapter 5]{CP}
for a full  account of this work.)  Explicit constructions of
the irreducible representations were later obtained independently by
Rhodes and Zalcstein~\cite{RhodesZalc} and by Lallement and
Petrich~\cite{LallePet} in terms of the Sch\"utzenberger representation by
monomial matrices~\cite{Schutzmonomial}.  All of these approaches
make use of Rees's theorem~\cite{Rees} characterizing $0$-simple semigroups up to
isomorphism, thereby rendering the results somewhat inaccessible to
the non-specialist in semigroup theory.  As a consequence, it seems that when
researchers from other areas need to use semigroup representation theory,
they are forced to reinvent parts of the theory for
themselves, e.g.~\cite{Brown1,Brown2}.  This paper,
like~\cite{mobius1,mobius2,Putcharep3}, aims to reconcile semigroup
representation theory with representation theory at large.

The goal of this note is to give a self-contained accounting of the
theory of simple modules over the semigroup algebra of a finite semigroup
using only the tools of associative algebras.  This should make
the results accessible to the general mathematician for the first
time.  Our key tool is a lemma of J.~A. Green~\cite{Greenpoly}.  An advantage of this approach is
that it avoids Wedderburn theory and hence works over an arbitrary commutative ring with unit.

We collect here some basic definitions and facts concerning finite
semigroups that can be found in any of~\cite{Arbib,CP,qtheor}.  Let
$S$ be a (fixed) finite semigroup.  If $e$ is an idempotent, then
$eSe$ is a monoid with
identity $e$; its group of units $G_e$ is called the \emph{maximal
  subgroup} of $S$ at $e$.  Two idempotents $e,f$ are said to be
\emph{isomorphic} if there exist $x\in eSf$ and
$x'\in fSe$ such that $xx'=e, x'x=f$.  In this case one can show that
$eSe$ is isomorphic to $fSf$ as
monoids and hence $G_e\cong G_f$.

If $s\in S$, then $J(s) = S^1sS^1$ is the principal (two-sided) ideal
generated by $s$ (here $S^1$ means $S$ with an adjoined identity).
Following Green~\cite{Green}, two elements $s,t$ of a semigroup $S$
are said to be \emph{$\J$-equivalent} if $J(s)=J(t)$.  In this case
one writes $s\J t$.   In fact there is a preorder on $S$ given by
$s\leq_{\J} t$ if $J(s)\subseteq J(t)$.  This preorder induces an
ordering on $\J$-classes in the usual way.

\begin{Fact}
In a finite semigroup, idempotents $e,f$ are isomorphic if and only if $e\J f$, that is, $SeS=SfS$.
\end{Fact}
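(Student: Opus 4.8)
The plan is to treat the two implications separately: the forward one is purely formal, while the converse requires an explicit construction followed by a single appeal to finiteness. I first record that for an idempotent $e$ one has $e=eee\in SeS$, so that $J(e)=S^1eS^1=SeS$; hence $e\J f$ is exactly the condition $SeS=SfS$, and throughout it suffices to argue with two-sided principal ideals generated by idempotents.

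\emph{Forward direction.} Suppose $e,f$ are isomorphic, witnessed by $x\in eSf$, $x'\in fSe$ with $xx'=e$ and $x'x=f$. Since $x\in eSf\subseteq SfS$ we get $e=xx'\in SfS$, and since $x'\in fSe\subseteq SeS$ we get $f=x'x\in SeS$; thus $SeS\subseteq SfS\subseteq SeS$, so $e\J f$. No finiteness is used here.

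\emph{Converse.} Assume $SeS=SfS$. Writing $e=pfq$ and putting $a=ep$, $b=qe$, idempotency of $e$ gives $e=eee=(ep)f(qe)=afb$, together with $ea=a$ and $be=b$. Set $x=af$ and $x'=fb$. Then $x=eaf\in eSf$, $x'=fbe\in fSe$, and $xx'=af\cdot fb=afb=e$ by $f^2=f$. Consider $g:=x'x=fbaf$. From $afb=e$ and $ea=a$ one gets $afba=a$, so $g^2=fb(afba)f=fbaf=g$ and $g$ is idempotent. The same relations give $ex=x=xg$ and $x'e=x'=gx'$, so $x\in eSg$ and $x'\in gSe$ with $xx'=e$, $x'x=g$; hence $x,x'$ witness an isomorphism between $e$ and $g$, and the forward direction yields $SgS=SeS$. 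Finally $fg=gf=g$, so $g$ lies beneath $f$, and $SgS=SeS=SfS$ gives $g\J f$.

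It remains to collapse $g\J f$ to $g=f$, and this single step, where finiteness is essential, is the main obstacle. I would use stability: in a finite semigroup $a\J ab$ implies $a\R ab$, and dually $a\J ba$ implies $a\L ba$. For the right version, from $a\le_{\mathscr J}ab$ write $a=uabv$, set $w=bv$ so that $a=uaw$, and iterate to $a=u^{n}aw^{n}$ for all $n$; choosing $n$ with $w^{n}$ idempotent (a power of every element is idempotent in a finite semigroup) gives $aw^{n}=a$, and since $w^{n}=b\cdot v(bv)^{n-1}\in bS^1$ we obtain $a=aw^{n}\in abS^1$, i.e. $a\le_{\mathscr R}ab$ and hence $a\R ab$. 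Applying this with $a=f$ and $ab=fg=g$, together with its left dual with $ba=gf=g$, turns $f\J g$ into $f\R g$ and $f\L g$, hence $f\H g$. Two idempotents in one $\HH$-class must coincide, since $f\L g$ gives $fg=f$ while $f\R g$ gives $fg=g$, so $f=g$. Therefore $x'x=g=f$ and $xx'=e$, with $x\in eSf$ and $x'\in fSe$, exhibit the desired isomorphism.
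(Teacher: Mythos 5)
Your proof is correct, but note that the paper does not prove this statement at all: it is listed among the background facts quoted from the standard references, so there is no internal argument to compare against. Your write-up is a complete, self-contained derivation. The forward direction is the usual formal computation. For the converse, the textbook route is to invoke that $\J=\D$ in a finite semigroup (stability) and then use the structure theory of $\D$-classes (Green's lemma) to extract an inverse pair $x,x'$; you have essentially inlined both ingredients. Your explicit construction $e=afb$ with $ea=a$, $be=b$, $x=af$, $x'=fb$, $g=x'x=fbaf$ correctly produces an idempotent $g$ isomorphic to $e$ with $fg=gf=g$, and all the identities ($afba=a$, $g^2=g$, $xx'=e$, $xg=x$, $gx'=x'$) check out. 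The reduction of $g\J f$ together with $g\leq f$ to $g=f$ is exactly where finiteness must enter, and your proof of stability ($a\J ab$ implies $a\R ab$ via $a=u^n80aw^n$ --- rather, $a=u^naw^n$ with $w^n$ idempotent) is the standard one and is carried out correctly; the final step that two $\H$-equivalent idempotents coincide is also right. The one stylistic gain of your approach over the usual citation-based treatment is that it needs no prior development of Green's relations $\R$, $\L$, $\D$ beyond the two lines you prove on the spot, which fits the paper's stated aim of self-containment; the cost is that the computation with $a,b,g$ obscures the conceptual picture (namely that $x$ lies in $R_e\cap L_f$ once one knows $e\D f$).
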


An element $s$ of a semigroup $S$ is said to be (von Neumann)
\emph{regular} if $s=sts$ for some $t\in S$.  Each idempotent is of
course regular.

\begin{Fact}\label{regularJclass}
Let $S$ be a finite semigroup and $J$ a $\J$-class of $S$.  Then the
following are equivalent:
\begin{enumerate}
\item $J$ contains an idempotent;
\item $J$ contains a regular element;
\item all elements of $J$ are regular;
\item $J^2\cap J\neq \emptyset$.
\end{enumerate}
\end{Fact}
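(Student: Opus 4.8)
The plan is to run the cycle $(1)\Rightarrow(2)\Rightarrow(4)\Rightarrow(1)$, which already ties together three of the conditions, and then to attach $(3)$ through the trivial implication $(3)\Rightarrow(2)$ and a separate argument that regularity spreads across $J$. Most links are immediate. For $(1)\Rightarrow(2)$ an idempotent $e$ is regular because $e=eee$, and a $\J$-class is nonempty by definition, so it contains a regular element; the same nonemptiness gives $(3)\Rightarrow(2)$ at once. For $(2)\Rightarrow(4)$, take a regular $s=sts\in J$ with $t\in S$: then $st$ is idempotent, since $(st)(st)=s(ts)t=(sts)t=st$, and $st\jbe s$ trivially while $s=(st)s$ gives $s\jbe st$, so $st\J s$ and $st\in J$; hence $st=(st)(st)\in J^2\cap J$.

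The crux, and the step I expect to be the main obstacle, is $(4)\Rightarrow(1)$: producing a genuine idempotent inside $J$ from the single hypothesis that $ab\in J$ for some $a,b\in J$ (so that $a\J b\J ab$). My plan is to use finiteness to turn the two-sided relation $ab\J b$ into a usable one-sided identity and then pass to an idempotent power. From $b\jbe ab$ write $b=u(ab)v=(ua)bv$ with $u,v\in S^1$, and set $w=ua\in S$, so that $b=wbv$ and, iterating, $b=w^nbv^n$ for all $n\geq 1$. Finiteness provides a $k\geq1$ with $g:=w^k$ idempotent, and then
\[
gb=w^kb=w^k(w^kbv^k)=w^{2k}bv^k=w^kbv^k=b .
\]
Thus $gb=b$ with $g=(ua)^k$ idempotent. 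Locating $g$ is now easy: $g\in S^1aS^1$ gives $g\jbe a$, while $b=gb\in S^1gS^1$ gives $b\jbe g$, and with $a\J b$ this squeezes $g\J b$, so $g$ is an idempotent of $J$. The only subtle ingredient is the existence of $k$, which uses nothing more than the elementary fact that some power of every element of a finite semigroup is idempotent.

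It remains to obtain $(3)$, for which I would prove $(2)\Rightarrow(3)$: once $J$ contains an idempotent, every $c\in J$ is regular. The engine is that regularity is inherited along Green's relations — if $c\,\R\,d$ with $d=dd'd$, then $c=d\alpha=dd'c$ and, as $d=c\beta$, this equals $c(\beta d')c$, so $c$ is regular, and dually for $\L$ — combined with the standard finiteness fact $\J=\D$, which connects any $c\in J$ to the idempotent by one $\R$-step followed by one $\L$-step. I would either quote $\J=\D$ from the cited references or reprove the instance needed here by the same boost-to-an-idempotent-power device used in $(4)\Rightarrow(1)$, now applied on both sides. All told, the real content sits in $(4)\Rightarrow(1)$; the rest is bookkeeping with the $\J$-preorder and with idempotent powers in finite monogenic subsemigroups.
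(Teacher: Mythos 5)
The paper never proves this statement: it is listed among the ``basic definitions and facts\dots that can be found in any of'' the cited references, so there is no internal proof to compare against, and your write-up has to stand on its own. It does. The cycle $(1)\Rightarrow(2)\Rightarrow(4)\Rightarrow(1)$ is correct: the computation $(st)^2=st$ for $s=sts$ gives $(2)\Rightarrow(4)$ (indeed it already gives $(2)\Rightarrow(1)$), and your $(4)\Rightarrow(1)$ argument is sound --- from $b=u(ab)v$ you get $b=(ua)^n b v^n$ for all $n$, the idempotent power $g=(ua)^k$ absorbs to give $gb=b$, and the squeeze $b\leq_{\J}g\leq_{\J}a\J b$ places $g$ in $J$; the only input is the elementary fact that every element of a finite semigroup has an idempotent power. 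The propagation lemma ($c\R d$ and $d$ regular imply $c$ regular, and dually for $\L$) is also correct as computed. The one place you lean on an external result is $\J=\D$ for finite semigroups in the step yielding $(3)$; quoting it is legitimate here (it sits in the same references the paper points to for this Fact), but if you want the argument fully self-contained you can avoid it: from $c=xey$ and $e=ucv$ with $e$ idempotent, substitute to get $e=(uxe)e(eyv)$, iterate, pass to idempotent powers $p=(uxe)^n$ and $q=(eyv)^n$ to obtain $e=rxe=eys$ for suitable $r,s\in S^1$, and then check directly that $c(sr)c=c$. That is the same substitution-and-idempotent-power device you already use in $(4)\Rightarrow(1)$, applied on both sides, so your stated fallback plan does go through.
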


A $\J$-class satisfying the equivalent conditions in
Fact~\ref{regularJclass} is called a \emph{regular} $\J$-class.  The
poset of regular $\J$-classes is denoted $\mathscr U(S)$.  It was
shown by Putcha~\cite{Putcharep3} that over an algebraically closed 
field of characteristic zero the module category of a regular semigroup 
(one in which all elements are regular)  is a highest
weight category~\cite{quasihered} with weight poset $\mathscr U(S)$.  We
need one last fact about finite semigroups in order to state and prove
the Clifford-Munn-Ponizovski{\u\i} theorem.

\begin{Fact}\label{dropoutofJ}
Let $S$ be a finite semigroup and $J$ a regular $\J$-class. Let $e\in
J$ be an idempotent.  Then $eSe\cap J=G_e$.
\end{Fact}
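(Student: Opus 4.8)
\emph{Plan.} I would prove the two inclusions $G_e\subseteq eSe\cap J$ and $eSe\cap J\subseteq G_e$ separately. The real content sits in one elementary lemma about finite monoids, which I would isolate first: \emph{in a finite monoid $M$ with identity $1$, an element $m$ is a unit if and only if $1\in MmM$, i.e.\ if and only if $m$ lies in the $\J$-class of $1$.} For the nontrivial direction, write $1=amb$ with $a,b\in M$. Finiteness enters through the observation that a one-sided invertible element of $M$ is automatically a unit: if $y$ has a right inverse $b$, then the map $x\mapsto xy$ is injective (cancel $b$ on the right), hence bijective since $M$ is finite, so $y$ also has a left inverse and is therefore a unit. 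Applying this to $am$ (right inverse $b$) and to $mb$ (left inverse $a$) shows both are units; reading off a left inverse of $m$ from $am$ and a right inverse of $m$ from $mb$ then forces $m$ to be a unit. I expect this lemma to be the main obstacle, since it is the only place where finiteness is genuinely used.

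The inclusion $G_e\subseteq eSe\cap J$ is then immediate. If $g\in G_e$ with inverse $g'\in eSe$, then $g\in eSe$ by definition, while $g\in eSe\subseteq S^1eS^1$ gives $g\leq_{\J}e$ and $e=gg'\in S^1gS^1$ gives $e\leq_{\J}g$; hence $g\J e$, so $g\in J$ and thus $g\in eSe\cap J$.

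For the reverse inclusion, let $s\in eSe\cap J$, so that $s=ese$ and $s\J e$. From $e\leq_{\J}s$ I may write $e=asb$ with $a,b\in S^1$. The key manipulation is to transport this global factorization into the local monoid $M=eSe$: putting $p=eae$ and $q=ebe$, both of which lie in $M$, a one-line computation using $ese=s$ gives $psq=(eae)\,s\,(ebe)=e(asb)e=e$. Thus $e\in MsM$, so $s$ lies in the $\J$-class of the identity $e$ of the finite monoid $M=eSe$. By the lemma of the first paragraph, $s$ is a unit of $eSe$, that is $s\in G_e$, which completes the argument.
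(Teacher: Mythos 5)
Your proof is correct. Note that the paper itself gives no proof of this Fact: it is listed among the ``basic definitions and facts concerning finite semigroups that can be found in any of'' the cited references, so there is no in-paper argument to compare against. Your argument is a valid, self-contained replacement. The auxiliary lemma (in a finite monoid, $1\in MmM$ forces $m$ to be a unit, via the pigeonhole argument that one-sided inverses in a finite monoid are two-sided) is exactly the right isolation of where finiteness is used, and the localization step $psq=(eae)s(ebe)=e(asb)e=e$, using $es=s=se$, correctly transports the global factorization $e=asb$ (with $a,b\in S^1$, which is harmless since $eae,ebe\in eSe$ in all cases) into the local monoid $eSe$. The standard textbook route goes through Green's relations machinery (stability of finite semigroups and the fact that the $\H$-class of an idempotent is its group of units); your version avoids all of that, which is consonant with the paper's stated aim of minimizing reliance on specialized semigroup theory.
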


Let $J$ be a $\J$-class of $S$.  Set $I_J = \{s\in S\mid J\nsubseteq
J(s)\}$; it is the ideal of all elements of $S$
that are not $\J$-above some (equals any) element of $J$.

\section{Characterization and Construction of Simple Modules}
Fix a finite semigroup $S$ and a commutative ring with unit $K$.  The
semigroup algebra
$KS$ need not be unital. If $A$ is a $K$-algebra,
not necessarily unital, then by a \emph{simple module} $M$, we mean a
(right) $A$-module $M$ such that $MA\neq 0$ and $M$
contains no proper non-zero submodules, or equivalently for all $0\neq
m\in M$, the cyclic module $mA=M$.   Of
course if $K$ is a field and $A$ is finite-dimensional, then every simple $A$-module is finite 
dimensional, being cyclic and hence a quotient of the regular module
$A$.  The category of (right) $A$-modules will be denoted $\module A$.
We adopt the convention that if $A$ is unital, then by $\module A$ we
mean the category of unital $A$-modules.  The reader should verify
that all functors considered in this paper respect this convention.  

If $M$ is a $KS$-module, then $\Ann M=\{s\in S\mid Ms=0\}$.
Clearly $\Ann M$ is an ideal of $S$.
The following definition, due to Munn~\cite{Munn1}, is crucial to semigroup representation
theory.

\begin{Def}[Apex]
A regular $\J$-class $J$ is said to be the apex of a $KS$-module $M$
if $\Ann M=I_J$.
\end{Def}
It is easy to see that $M$ has apex $J$ if and only if $J$ is the
unique $\leq_{\J}$-minimal $\J$-class that does not annihilate $M$.

Fix an idempotent transversal $E=\{e_J\mid J\in \mathscr U(S)\}$ to
the set $\mathscr U(S)$ of regular $\J$-classes  and set $G_J=
G_{e_J}$.  Let $A_J=KS/KI_J$.  Notice that the category of
$KS$-modules with apex $J$ can be identified with the full subcategory
of $\module {A_J}$ whose objects are modules $M$ such that
$Me_J\neq 0$.

Our first goal is to show that every simple module has an apex.  This
result is due independently to Munn and
Ponizovski{\u\i}~\cite{Munn1,Munn2,Poni}.

\begin{Thm}\label{haveapex}
Let $M$ be a simple $KS$-module.  Then $M$ has an apex.
\end{Thm}
\begin{proof}
Since $MKS\neq 0$, there is a $\leq_{\J}$-minimal $\J$-class $J$ so
that $J\nsubseteq \Ann M$.  Let $I = S^1JS^1$; of course, $I$ is an
ideal of $S$. Since $I\setminus J$ annihilates $M$
by minimality of $J$, it follows $0\neq MKJ=MKI$. From the fact that $I$ is an
ideal of $S$, we may deduce that $MKI$ is a $KS$-submodule and so  by simplicity
\begin{equation}\label{apex1}
M= MKI = MKJ.
\end{equation}
Therefore, since $JI_J\subseteq I\setminus J\subseteq \Ann M$, it follows
from \eqref{apex1} that $I_J= \Ann M$.  Now if $J$ is not regular,
then Fact~\ref{regularJclass} implies $J^2\subseteq I\setminus J$ and hence $J$
annihilates $M$ by \eqref{apex1}, a contradiction.  Thus $J$ is
regular and is an apex for $M$.
\end{proof}

Now we wish to establish a bijection between simple $KS$-modules with apex
$J$ and simple $KG_J$-modules.  This relies on a well-known result of
Green~\cite{Greenpoly}.   Let $A$ be an algebra and $e$ an idempotent
of $A$. Then $eA$ is an $eAe$-$A$-bimodule and $Ae$ is an $A$-$eAe$-bimodule.  Hence we have a
restriction functor $\mathrm{Res}:\module A\to \module {eAe}$ and induction/coinduction
functors $\mathrm{Ind},\mathrm{Coind}:\module {eAe}\to \module A$ given by
\begin{equation*}
\mathrm{Ind}(M) = M\otimes _{eAe}eA,\ \mathrm{Res}(M) = Me\ \text{and}\
\mathrm{Coind}(M) = \mathrm{Hom}_{eAe}(Ae,M).
\end{equation*}
Moreover,  $\mathrm{Ind}$ is right exact, $\mathrm{Res}$ is exact,  $\mathrm{Coind}$ is left exact and
$\mathrm{Ind}$ and $\mathrm{Coind}$ are the left and right adjoints of $\mathrm{Res}$, respectively.  This follows from observing that
$\mathrm{Hom}_A(eA,M) = Me = M\otimes_A Ae$ and the
usual adjunction between hom and the tensor product. Moreover, it is
well known that unit of the first adjunction gives a natural isomorphism
$M\cong \mathrm{Ind}(M)e$ while the counit of the second gives a
natural isomorphism $\mathrm{Coind}(M)e\cong M$.  

There also two important functors $N,L:\module A\to \module A$ given by
$N(M) = \{m\in M\mid mAe=0\}$ and $L(M) = MeA$.  It is easily verified that $N(M)$ is
the largest $A$-submodule of $M$ that is annihilated by $e$, while $L(M)$ is the smallest $A$-submodule of $M$ with $L(M)e=Me$.   Our
next result can be found in~\cite[6.2]{Greenpoly}, but we reproduce
the proof here for convenience of the reader.

\begin{Lemma}[Green]\label{greenslemma}
Let $A$ be an algebra and $e$ an idempotent of $A$.
\begin{enumerate}
\item If $M$ is a simple $A$-module, then $Me=0$ or $Me$ is a simple
  $eAe$-module.
\item  If $V$ is a simple $eAe$-module, then
  $\mathrm{Ind}(V)$ has unique maximal
  submodule  $N(\mathrm{Ind}(V))$ and
  $\mathrm{Ind}(V)/N(\mathrm{Ind}(V))$ is the unique simple
$A$-module $M$ with $Me\cong V$.
\item  If $V$ is a simple $eAe$-module, then $\mathrm{Coind}(V)$ has a unique minimal $A$-submodule $L(\mathrm{Coind}(V))$ and $L(\mathrm{Coind}(V))$ is the unique simple $A$-module $M$ with $Me\cong V$.
\end{enumerate}
Consequently, restriction yields a bijection between simple
  $A$-modules that are not annihilated by $e$ and simple
  $eAe$-modules.
\end{Lemma}
\begin{proof}
To prove (1), assume $Me\neq 0$.  Let $m\in
Me$ be non-zero.  Then $meA = mA=M$, so $meAe=Me$.  Thus $Me$ is
simple.

Now we turn to (2).  Let $V$ be a simple $eAe$-module and suppose
$w\in \mathrm{Ind}(V)$ with $w\notin N(\mathrm{Ind}(V))$.  Then $0\neq wAe\subseteq \mathrm{Ind}(V)e$.
But $\mathrm{Ind}(V)e\cong V$ is a simple $eAe$-module, so \[(wAeAe)A = \mathrm{Ind}(V)eA =
(V\otimes_{eAe}eA)eA = (V\otimes _{eAe}e)A = \mathrm{Ind}(V)\] and thus $N(\mathrm{Ind}(V))$ is
the unique maximal $A$-submodule of $\mathrm{Ind}(V)$.  In particular,
$\mathrm{Ind}(V)/N(\mathrm{Ind}(V))$ is a simple $A$-module.  Since restriction
is exact and $N(\mathrm{Ind}(V))e=0$ by construction, it follows
\[[\mathrm{Ind}(V)/N(\mathrm{Ind}(V))]e\cong \mathrm{Ind}(V)e/N(\mathrm{Ind}(V))e\cong \mathrm{Ind}(V)e\cong V.\]

It remains to prove uniqueness.  Suppose $M$ is a simple
$A$-module such that $Me\cong V$.  Then, using the adjunction between
induction and restriction, we have $\mathrm{Hom}_{eAe}(V,Me)\cong
\mathrm{Hom}_A(V\otimes_{eAe}eA, M)$.  Hence the isomorphism
$V\rightarrow Me$ corresponds to a non-zero homomorphism $\p:\mathrm{Ind}(V)\to M$,
which is necessarily onto as $M$ is simple.  But $N(\mathrm{Ind}(V))$ is the unique
maximal submodule of $\mathrm{Ind}(V)$, so $\ker \p=N(\mathrm{Ind}(V))$ and hence $M\cong
\mathrm{Ind}(V)/N(\mathrm{Ind}(V))$, as required.

Finally, to prove (3) first observe that if $M$ is any non-zero
$A$-submodule of $\mathrm{Coind}(V)$, then $Me\neq 0$.  Indeed,
suppose $Me=0$ and let $\p\in M$. Then, for any $x$ in $Ae$, we have
$\p(x) = (\p xe)(e)=0$ since $\p xe\in Me=0$.  It follows that $M=0$.
Since $\mathrm{Coind}(V)e\cong V$ is a simple $eAe$-module, it now
follows that if $M$ is a non-zero $A$-submodule of
$\mathrm{Coind}(V)$, then $Me=\mathrm{Coind}(V)e$ and hence
\[L(\mathrm{Coind}(V)) = \mathrm{Coind}(V)eA\subseteq MeA\subseteq
M.\]  This establishes that $L(\mathrm{Coind}(V))$ is the unique
minimal $A$-submodule.  Since $L(\mathrm{Coind}(V))e =
\mathrm{Coind}(V)eAe = \mathrm{Coind}(V)e\cong V$, it just remains to
prove uniqueness.  Suppose $M$ is a simple $A$-module with $Me\cong
V$. Then the existence of a non-zero element of
$\mathrm{Hom}_{eAe}(Me,V)\cong \mathrm{Hom}_A(M,\mathrm{Coind}(V))$
implies that $M$ admits a non-zero homomorphism to
$\mathrm{Coind}(V)$.  Hence $M$ is isomorphic to a simple
$A$-submodule of $\mathrm{Coind}(V)$. But $L(\mathrm{Coind}(V))$ is
the unique simple submodule of $\mathrm{Coind}(V)$ and so $M\cong
L(\mathrm{Coind}(V))$, as required. 
\end{proof}

We may now complete the proof of the Clifford-Munn-Ponizovski{\u\i} theorem,
with an explicit construction of the simple modules equivalent to the one
found in~\cite{RhodesZalc,LallePet}.

\begin{Thm}[Clifford, Munn, Ponizovski{\u\i}]
Let $S$ be a finite semigroup, $K$ a commutative ring with unit
and $E=\{e_J\mid
J\in \mathscr U(S)\}$ an idempotent transversal to the set $\mathscr
U(S)$ of regular
$\J$-classes of $S$.  Let $G_J$ be the maximal subgroup $G_{e_J}$.
Define functors $\mathrm{Ind}_{G_J}^S,\mathrm{Coind}_{G_J}^S:\module KG_J\to \module KS$ by
\begin{align*}
\mathrm{Ind}_{G_J}^S(V)&= V\otimes_{KG_J}e_J(KS/KI_J)\\
\mathrm{Coind}_{G_J}^S(V)&= \mathrm{Hom}_{KG_J}((KS/KI_J)e_J,V).
\end{align*}
Then:
\begin{enumerate}
\item If $M$ is a simple $KS$-module with apex $J$, then $Me_J$ is a
  simple $KG_J$-module;
\item If $V$ is a simple $KG_J$-module, then \[N = \{w\in
  \mathrm{Ind}_{G_J}^S(V)\mid wKSe_J=0\},\] is the unique maximal
  $KS$-submodule of $\mathrm{Ind}_{G_J}^S(V)$ and
  $\mathrm{Ind}_{G_J}^S(V)/N$ is the unique simple $KS$-module $M$
with apex $J$ such that $Me_J=V$;
\item If $V$ is a simple $KG_J$-module, then
  $\mathrm{Coind}_{G_J}^S(V)eA$ is the unique minimal $A$-submodule of
  $\mathrm{Coind}_{G_J}^S(V)$ and moreover is the unique simple
  $KS$-module $M$ 
with apex $J$ such that $Me_J=V$.
\end{enumerate}
Consequently, if $K$ is a field there is a bijection between
irreducible representations
of $S$ and irreducible representations of the maximal subgroups $G_J$ of $S$,
$J\in \mathscr U(S)$.
\end{Thm}
\begin{proof}
Theorem~\ref{haveapex} implies that every simple $KS$-module $M$ has an
apex. Again setting $A_J=KS/I_J$ for a regular $\J$-class $J$, we know
that simple $KS$-modules with apex $J$ are in bijection with simple
$A_J$-modules $M$ such that $Me_J\neq 0$. It follows directly from
Fact~\ref{dropoutofJ} that \[e_JA_Je_J=Ke_JSe_J/Ke_JI_Je_J=KG_J.\]
Lemma~\ref{greenslemma} then yields that simple $A_J$-modules not
annihilated by $e_J$, that is simple $KS$-modules with apex $J$, are
in bijection with simple $KG_J$-modules in the prescribed manner.
\end{proof}

Let us make a remark to relate the above construction of the simple
modules to the ones found in~\cite{RhodesZalc,LallePet}.   All the
facts about finite semigroups used in this discussion can be found in the
appendix of~\cite{qtheor} or in~\cite{Arbib}. According to
Green~\cite{Green}, two elements $s,t$ of a semigroup are said to be
\emph{$\mathscr R$-equivalent} if they generate the same principal right
ideal.  Dually $s,t$ are said to be \emph{$\L$-equivalent} if they generate
the same principal left ideal.  If $S$ is a finite semigroup, then it
is well known (retaining our previous notation) that $e_JS\cap J$ is the $\R$-class $R_{e_J}$ of $e_J$ and $Se_J\cap J$ is the $\L$-class
$L_{e_J}$ of $e_J$.  Furthermore, left multiplication yields a free action of
$G_J$ on the left of $R_{e_J}$ by automorphisms of the action of $S$
on the right of $R_{e_J}$ by partial transformations (induced by right
multiplication).  Moreover, the $G_J$-orbits on $R_{e_J}$ are in bijection
with the set of $\L$-classes of $J$.  Let $T$ be a transversal to the
$G_J$-orbits.  Now $e_JKS/I_J$ can be identified as a vector space
with $KR_{e_J}$ and the right $KS$-module structure is the
linearization of the right action of $S$ on $R_{e_J}$ described
above.  Moreover, under this identification, the left $KG_J$-module
structure on $KR_{e_J}$ is induced by the free left action of $G_J$ on
$R_{e_J}$ and so $KR_{e_J}$ is a free left $KG$-module with basis
$T$. In particular, the functor $\mathrm{Ind}_{G_J}^S$ is exact.
It is straightforward to show using~\cite[Theorem~10.4.1]{GM} that under the
usual identification of maximal subgroups inside $J$, the 
$KG_J$-$KS$-bimodule $KR_{e_J}$ does not depend (up to isomorphism) on the
choice of $e_J\in J$. 

From the above it follows that  
$\mathrm{End}_{KG_J}(KR_{e_J})\cong M_n(KG_J)$ where $n$
is the number of $\L$-classes in $J$ and so there results a representation
$\rho_J:S\to M_n(KG_J)$, which is easily checked to be the classical right
Sch\"utzenberger representation by row monomial
matrices~\cite{CP,Schutzmonomial} since if $s\in S$ and $t\in T$, then
either $ts=0$ or $ts = gt'$ for unique elements $t'\in T$ and $g\in
G_J$.

Now let $V$ be a simple $KG_J$-module affording the
irreducible representation $\p:G_J\to GL_r(K)$. Then the matrix
representation afforded by the module \mbox{$V\otimes_{KG_J}KR_{e_J}$} is
the tensor product of $\p$ with $\rho_J$.  Now an element of $Se_J$
which does not belong to $J$ automatically annihilates
$V\otimes_{KG_J}KR_{e_J}$, so the unique maximal submodule consists of
those vectors annihilated by $Se_J\cap J = L_{e_J}$, the $\L$-class of
$e_J$.  If one chooses Rees matrix coordinates for $J$~\cite{CP,qtheor}, then
it is not hard to show that the vectors annihilated by the $\L$-class
of $e_J$ are those belonging to the null space of the image of the
sandwich matrix under $\p$.  Hence the construction of the simple
modules we have provided corresponds exactly to the construction found
in~\cite{RhodesZalc,LallePet}, but our proof avoids Rees matrix
semigroups and Munn algebras.

The coinduced module also has a natural semigroup theoretic
interpretation.  Indeed,   \mbox{$\mathrm{Hom}_{KG_J}((KS/KI_J)e_J,V)\cong
\mathrm{Hom}_{G_J}(L_{e_J},V)$} where we view $L_{e_J}$ and $V$ as right
$G_{J}$-sets.  The semigroup
$S$ acts on the left of $L_{e_J}$ by the left
Sch\"utzenberger representation and this induces the $KS$-module
structure on $\mathrm{Hom}_{G_J}(L_{e_J},V)$. Since $L_{e_J}$ is a
free right $G_{e_J}$-set and the orbits are in bijection with the set
of $\R$-classes in $J$, elements of $\mathrm{Hom}_{G_J}(L_{e_J},V)$
are in bijection with elements of $V^m$ where $m$ is the number of
$\R$-classes of $J$.  The space $V^m$ (viewed as row vectors) is
naturally a right $KS$-module 
via the left Sch\"utzenberger representation $\lambda_J:S\to M_m(KG_J)$
and the module structure agrees with the original one.
If one chooses Rees matrix coordinates for $J$~\cite{CP,qtheor}, then
the structure matrix $C$ takes $V^n$ to $V^m$ where $n$ is the number
of $\L$-classes of $S$. One can verify that the image of $C$ is the
unique minimal
$KS$-submodule of $V^m$. (The fact that it
is a submodule is a consequence of the so-called linked equations~\cite{Arbib,qtheor}.) This yields the
other construction of the irreducible representations
found in~\cite{RhodesZalc}.  Since $L_{e_J}$ is a free right
$G_J$-set, it follows that $\mathrm{Coind}_{G_J}^S$ is exact.  It
should be mentioned that all coinduced 
constructions can be obtained from induced constructions for the
opposite semigroup via duality.

Putcha has used both
the induced and coinduced modules, which he calls the left and right
induced modules, in his work on representation
theory~\cite{Putcharep3,Putcharep5}.

As an application, we provide the description of the irreducible
representations of an idempotent semigroup that was rediscovered by
Brown~\cite{Brown1,Brown2} and put to good effect in the study of random
walks.  First we establish a well-known lemma.

\begin{Lemma}\label{bandstable}
Let $S$ be a semigroup of idempotents and let $J$ be a $\J$-class of
$S$.  Then the complement of $I_J$ is a subsemigroup of $S$.
\end{Lemma}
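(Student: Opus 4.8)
The plan is to reformulate the statement order-theoretically and then feed in the one genuinely structural fact about idempotent semigroups. Write $C=S\setminus I_J$. By the definition of $I_J$ we have $C=\{s\in S\mid J\subseteq J(s)\}$; that is, $C$ consists precisely of the elements lying $\mathscr J$-\emph{above} $J$. Passing to $\mathscr J$-classes and writing $\natural\colon S\to S/\mathscr J$ for the quotient map and $\overline J=\natural(j)$ for the class of any $j\in J$, this says exactly that $C=\natural^{-1}({\uparrow}\,\overline J)$, the preimage of the principal filter ${\uparrow}\,\overline J=\{\,y\mid y\geq_{\mathscr J}\overline J\,\}$. Since the preimage of a subsemigroup under a homomorphism is a subsemigroup, I would reduce the lemma to two assertions: that $\natural$ is a homomorphism onto a semilattice, and that a principal filter in a semilattice is closed under the semilattice operation.

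First I would record the part that costs nothing and holds in any semigroup: for all $s,t$ one has $st\in S^1sS^1\cap S^1tS^1$, hence $J(st)\subseteq J(s)\cap J(t)$, so the class of the product satisfies $\overline{st}\leq_{\mathscr J}\overline s$ and $\overline{st}\leq_{\mathscr J}\overline t$. Thus $\overline{st}$ is automatically a common lower bound of $\overline s$ and $\overline t$ in the $\leq_{\mathscr J}$-order. The entire force of the lemma is the claim that for a band $\overline{st}$ is in fact the \emph{greatest} such lower bound, i.e.\ $\overline{st}=\overline s\wedge\overline t$.

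This is where the hypothesis $s=s^2$ enters, and it is the step I expect to carry the real weight. Because every element of $S$ is idempotent, the partition into $\mathscr J$-classes is a semilattice congruence: $\natural\colon S\to Y:=S/\mathscr J$ is a surjective homomorphism onto a semilattice, each fibre being a rectangular band, and $\overline a\,\overline b=\overline a\wedge\overline b$ (Clifford--McLean; see \cite{CP,qtheor}). Granting this, the conclusion is immediate. If $s,t\in C$ then $\overline s,\overline t\geq_{\mathscr J}\overline J$, so $\overline J$ is a common lower bound of $\overline s$ and $\overline t$; since $\overline{st}=\overline s\wedge\overline t$ is their greatest lower bound we obtain $\overline J\leq_{\mathscr J}\overline{st}$, that is $st\in C$. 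Equivalently, ${\uparrow}\,\overline J$ is closed under $\wedge$ and hence is a subsemigroup of $Y$, so $C=\natural^{-1}({\uparrow}\,\overline J)$ is a subsemigroup of $S$.

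The crucial and non-formal point is thus exactly the semilattice property of $\mathscr J$ on a band: $\overline{st}$ equals the meet of $\overline s$ and $\overline t$, which fails for general (even finite) semigroups and is the only place idempotency is used. Note that this line of argument needs no finiteness assumption, matching the generality of the statement. If one wished to avoid citing the band structure theorem, in the finite case one could instead fix an idempotent $e\in J$, establish the auxiliary characterization $e\leq_{\mathscr J}s\iff ese=e$ (the nontrivial direction using stability, so that $es\mathrel{\mathscr R}e$ and $se\mathrel{\mathscr L}e$), and then verify directly that $e(st)e=(es)(te)=e$, whence $st\in C$; but this alternative rests on the same band-plus-finiteness input and is strictly less general than the semilattice argument above.
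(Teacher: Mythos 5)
Your proof is correct, but it takes a genuinely different route from the paper. You reduce the lemma to the Clifford--McLean structure theorem for bands: the $\mathscr J$-classes form a semilattice congruence, $\natural\colon S\to S/\mathscr J$ is a homomorphism onto a semilattice with $\overline{st}=\overline s\wedge\overline t$, and then the complement of $I_J$ is the preimage of a principal filter, hence a subsemigroup for purely formal reasons. The paper instead gives a short self-contained computation: it first shows $J$ itself is a subsemigroup (for $e,f\in J$ write $e=ufv$ and use idempotency of $fv$ to get $efv=e$, so $ef\mathrel{\mathscr J}e$), and then, given $e=usv=u's'v'$, observes that $vus$ and $s'v'u'$ lie in $J$, so their product $vuss'v'u'$ does too, exhibiting $J\subseteq S^1ss'S^1$. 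The trade-off is clear: your argument is conceptually cleaner and makes transparent exactly where idempotency enters (it forces $\overline{st}$ to be the \emph{greatest} lower bound rather than just a lower bound), but it imports a nontrivial structure theorem as a black box, which cuts against the paper's stated aim of self-containedness; the paper's argument is a five-line direct verification using nothing beyond the definition of $\mathscr J$. Both arguments correctly avoid any finiteness hypothesis. Your proposed finite-case alternative via stability is also sound but, as you note, strictly weaker in generality than either of the other two arguments.
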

\begin{proof}
First we show that $J$ is a subsemigroup.  Let $e,f\in J$.  Then we have
$e=ufv$ some $u,v\in S$ and so $efv = ufvfv=ufv=e$, establishing $ef\in
J$. Next suppose $J\subseteq SsS\cap Ss'S$. We need $J\subseteq Sss'S$.
Let $e\in J$.  Then $e=usv$ and 
$e=u's'v'$ with $u,v,u',v'\in S$.  Since $us(vus)v=e$ and
$u'(s'v'u')s'v'=e$, it follows $vus\J e\J s'v'u'$.  Since $J$ is a
subsemigroup, $vuss'v'u'\in J$ and 
hence $J\subseteq Sss'S$, as required.
\end{proof}

\begin{Cor}
Let $S$ be a finite semigroup all of whose elements are idempotents.
Then all the irreducible representations of $S$ over a field have
degree one and the unique irreducible representation $\p_J$ with apex
$J$ is given by
\begin{equation}\label{irreduciblesforbands}
\p_J(s) = \begin{cases} 0 & s\in I_J\\
1 & \text{otherwise.}
\end{cases}
\end{equation}
\end{Cor}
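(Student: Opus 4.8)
The plan is to combine the Clifford--Munn--Ponizovski\u\i{} theorem with the structural fact that in a band (idempotent semigroup) every maximal subgroup is trivial. First I would observe that since $S$ consists entirely of idempotents, each $\J$-class $J$ is regular (it contains an idempotent, namely any of its elements), so $\mathscr U(S)$ is the whole poset of $\J$-classes. For a chosen idempotent $e_J\in J$, the maximal subgroup $G_J = G_{e_J}$ is the group of units of $e_JSe_J$; but $e_J$ is itself idempotent and $e_JSe_J$ is a band as well, and the only idempotent in a group is its identity, so $G_J = \{e_J\}$ is trivial. A trivial group has exactly one irreducible representation over any field, the trivial one of degree one. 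By the main theorem, the irreducible representations of $S$ are in bijection with the irreducibles of the groups $G_J$, so there is exactly one irreducible representation with each apex $J$, and each has degree equal to the degree of the corresponding $G_J$-representation, namely one.

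The second part is to identify this unique degree-one representation explicitly with the formula in \eqref{irreduciblesforbands}. By definition of apex, the simple module $M$ with apex $J$ has $\Ann M = I_J$, so every $s\in I_J$ acts as zero, giving $\p_J(s)=0$ for $s\in I_J$. For the remaining elements I would argue that $\p_J$ restricted to the complement of $I_J$ must be a homomorphism into the multiplicative monoid of the field sending idempotents to idempotents; the only idempotents in a field are $0$ and $1$, and since $s\notin I_J$ does not annihilate $M$ the value must be $1$. Here Lemma~\ref{bandstable} enters: the complement of $I_J$ is a subsemigroup, so the formula \eqref{irreduciblesforbands} genuinely defines a semigroup homomorphism (a one-dimensional representation) rather than merely a set map, and one checks it is multiplicative by cases using that $I_J$ is an ideal and its complement is closed under products.

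To make the second paragraph rigorous I would verify that \eqref{irreduciblesforbands} is well-defined and multiplicative: if $s,t\notin I_J$ then $st\notin I_J$ by Lemma~\ref{bandstable}, so $\p_J(s)\p_J(t)=1=\p_J(st)$; if either $s$ or $t$ lies in $I_J$ then $st\in I_J$ since $I_J$ is an ideal, so $\p_J(s)\p_J(t)=0=\p_J(st)$. This confirms $\p_J$ is a representation, and since its kernel-on-objects recovers $\Ann M = I_J$, its apex is indeed $J$; uniqueness then follows from the bijection of the main theorem together with the triviality of $G_J$.

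The main obstacle I anticipate is not the degree count, which is immediate once $G_J$ is seen to be trivial, but rather the clean verification that \eqref{irreduciblesforbands} is exactly the representation produced by the construction---that is, matching the abstract simple module $\mathrm{Ind}_{G_J}^S(V)/N$ (with $V$ the trivial one-dimensional $KG_J$-module) to the concrete formula. The cleanest route avoids the explicit induced-module computation entirely: one argues directly that \emph{any} one-dimensional representation with apex $J$ must be given by \eqref{irreduciblesforbands}, since its values on idempotents lie in $\{0,1\}$ and are determined by membership in $I_J$, and then invokes existence and uniqueness from the main theorem. Thus the real content to get right is the reduction of the value on each element to $0$ or $1$ using that every element is idempotent, which is where the hypothesis is used most essentially.
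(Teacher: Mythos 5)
Your proposal is correct and follows essentially the same route as the paper: use Lemma~\ref{bandstable} to check that \eqref{irreduciblesforbands} is multiplicative, hence a one-dimensional (so irreducible) representation with annihilator exactly $I_J$ and therefore apex $J$; observe that $G_J$ is trivial because a band has no nontrivial subgroups; and then invoke the uniqueness part of the Clifford--Munn--Ponizovski{\u\i} theorem. One caution about your first paragraph: the assertion that the simple $KS$-module corresponding to a simple $KG_J$-module $V$ has the same degree as $V$ is false in general --- the bijection only guarantees $Me_J\cong V$, and $M$ is typically larger than $V$ --- so the degree-one conclusion cannot be read off from the bijection alone. The route you describe in your final paragraph (exhibit \eqref{irreduciblesforbands} as an irreducible representation with apex $J$ and then appeal to uniqueness) is the correct fix and is exactly the paper's argument; the paper additionally makes the identification concrete by showing the augmentation map $\varepsilon: e_JKS/KI_J\to K$ is a surjective module map whose kernel is the unique maximal submodule, a verification you deliberately and legitimately bypass.
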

\begin{proof}
Let $J$ be a regular $\J$-class.  Lemma~\ref{bandstable} implies that
\eqref{irreduciblesforbands} is an irreducible representation with
apex $J$. It is afforded by $K$ with $S$-action \[ks 
= \begin{cases} 0 & s\in I_J\\ k & \text{otherwise.}\end{cases}\] 
Since $G_J$ is trivial, there is exactly one simple $KS$-module with
apex $J$, namely the quotient of 
$M=e_JKS/KI_J$ by its unique maximal submodule $N$.  Now
$R_{e_J}=e_JS\setminus I_J = e_JS\cap J$ is a basis for $M$.  As a
consequence of
Lemma~\ref{bandstable}, $R_{e_J}s \subseteq R_{e_J}$ for $s\in
S\setminus I_J$ and $R_{e_J}s\subseteq I_J$, otherwise.  Thus the
augmentation map $\varepsilon:M\to K$ 
  sending each element of $R_{e_J}$ to $1$ is a surjective morphism of
  $KS$-modules with kernel the unique maximal submodule $N$ of $M$, as
  $K$ is of course simple.  This completes the proof.
\end{proof}

The above argument applies {\it mutatis mutandis} to semigroups all of whose
subgroups are trivial and whose regular $\J$-classes are
subsemigroups.  This class of semigroups, known as $\pv {DA}$, was
introduced by Sch\"utzenberger in his study of unambiguous products of
regular languages~\cite{Schutznonambig}.

\bibliographystyle{abbrv}
\bibliography{standard2}

\def\malce{\mathbin{\hbox{$\bigcirc$\rlap{\kern-7.75pt\raise0,50pt\hbox{${\tt
  m}$}}}}}\def\cprime{$'$} \def\cprime{$'$} \def\cprime{$'$} \def\cprime{$'$}
  \def\cprime{$'$} \def\cprime{$'$} \def\cprime{$'$} \def\cprime{$'$}
  \def\cprime{$'$} \def\cprime{$'$}
\begin{thebibliography}{10}

\bibitem{Brown1}
K.~S. Brown.
\newblock Semigroups, rings, and {M}arkov chains.
\newblock {\em J. Theoret. Probab.}, 13(3):871--938, 2000.

\bibitem{Brown2}
K.~S. Brown.
\newblock Semigroup and ring theoretical methods in probability.
\newblock In {\em Representations of finite dimensional algebras and related
  topics in Lie theory and geometry}, volume~40 of {\em Fields Inst. Commun.},
  pages 3--26. Amer. Math. Soc., Providence, RI, 2004.

\bibitem{Clifford2}
A.~H. Clifford.
\newblock Matrix representations of completely simple semigroups.
\newblock {\em Amer. J. Math.}, 64:327--342, 1942.

\bibitem{Clifford1}
A.~H. Clifford.
\newblock Basic representations of completely simple semigroups.
\newblock {\em Amer. J. Math.}, 82:430--434, 1960.

\bibitem{CP}
A.~H. Clifford and G.~B. Preston.
\newblock {\em The algebraic theory of semigroups. {V}ol. {I}}.
\newblock Mathematical Surveys, No. 7. American Mathematical Society,
  Providence, R.I., 1961.

\bibitem{quasihered}
E.~Cline, B.~Parshall, and L.~Scott.
\newblock Finite-dimensional algebras and highest weight categories.
\newblock {\em J. Reine Angew. Math.}, 391:85--99, 1988.

\bibitem{GM}
O.~Ganyushkin and V.~Mazorchuk.
\newblock Introduction to classical finite transformation semigroups.
\newblock
  http://www.math.uu.se/$\tilde{\hspace{2mm}}$mazor/PREPRINTS/SEMI/book.pdf,
  2007.

\bibitem{Green}
J.~A. Green.
\newblock On the structure of semigroups.
\newblock {\em Ann. of Math. (2)}, 54:163--172, 1951.

\bibitem{Greenpoly}
J.~A. Green.
\newblock {\em Polynomial representations of {${\rm GL}\sb{n}$}}, volume 830 of
  {\em Lecture Notes in Mathematics}.
\newblock Springer-Verlag, Berlin, 1980.

\bibitem{Arbib}
K.~Krohn, J.~Rhodes, and B.~Tilson.
\newblock {\em Algebraic theory of machines, languages, and semigroups}.
\newblock Edited by Michael A. Arbib. With a major contribution by Kenneth
  Krohn and John L. Rhodes. Academic Press, New York, 1968.
\newblock Chapters 1, 5--9.

\bibitem{LallePet}
G.~Lallement and M.~Petrich.
\newblock Irreducible matrix representations of finite semigroups.
\newblock {\em Trans. Amer. Math. Soc.}, 139:393--412, 1969.

\bibitem{Munn2}
W.~D. Munn.
\newblock On semigroup algebras.
\newblock {\em Proc. Cambridge Philos. Soc.}, 51:1--15, 1955.

\bibitem{Munn1}
W.~D. Munn.
\newblock Matrix representations of semigroups.
\newblock {\em Proc. Cambrdige Philos. Soc.}, 53:5--12, 1957.

\bibitem{Poni}
I.~S. Ponizovski{\u\i}.
\newblock On matrix representations of associative systems.
\newblock {\em Mat. Sb. N.S.}, 38(80):241--260, 1956.

\bibitem{Putcharep5}
M.~S. Putcha.
\newblock Complex representations of finite monoids.
\newblock {\em Proc. London Math. Soc. (3)}, 73(3):623--641, 1996.

\bibitem{Putcharep3}
M.~S. Putcha.
\newblock Complex representations of finite monoids. {II}. {H}ighest weight
  categories and quivers.
\newblock {\em J. Algebra}, 205(1):53--76, 1998.

\bibitem{Rees}
D.~Rees.
\newblock On semi-groups.
\newblock {\em Proc. Cambridge Philos. Soc.}, 36:387--400, 1940.

\bibitem{qtheor}
J.~Rhodes and B.~Steinberg.
\newblock {\em The {$\mathfrak q$}-theory of finite semigroups}.
\newblock Springer, To appear.

\bibitem{RhodesZalc}
J.~Rhodes and Y.~Zalcstein.
\newblock Elementary representation and character theory of finite semigroups
  and its application.
\newblock In {\em Monoids and semigroups with applications (Berkeley, CA,
  1989)}, pages 334--367. World Sci. Publ., River Edge, NJ, 1991.

\bibitem{Schutzmonomial}
M.-P. Sch{\"u}tzenberger.
\newblock Sur la repr\'esentation monomiale des demi-groupes.
\newblock {\em C. R. Acad. Sci. Paris}, 246:865--867, 1958.

\bibitem{Schutznonambig}
M.~P. Sch{\"u}tzenberger.
\newblock Sur le produit de concat\'enation non ambigu.
\newblock {\em Semigroup Forum}, 13(1):47--75, 1976/77.

\bibitem{mobius1}
B.~Steinberg.
\newblock M\"obius functions and semigroup representation theory.
\newblock {\em J. Combin. Theory Ser. A}, 113(5):866--881, 2006.

\bibitem{mobius2}
B.~Steinberg.
\newblock M\"obius functions and semigroup representation theory {II}:
  character formulas and multiplicities.
\newblock {\em Advances in Math.}, to appear.

\end{thebibliography}

\end{document}